\documentclass[a4paper, 10pt]{amsart}

\usepackage{latexsym, amssymb,amsmath, amsthm,amsfonts,color,tikz,lscape,geometry}
\usetikzlibrary{math}
\usetikzlibrary{arrows.meta}
\usepackage{pdflscape}
\usepackage{rotating}

\newcommand{\K}{\mathbb{K}}

\newcommand{\M}{{\mathcal{M}}}
\newcommand{\B}{{\mathcal{B}}}

\newcommand{\bA}{{\mathbf{A}}}

\def\SL{\operatorname{SL}}

\def\GL{\operatorname{GL}}

\def\Z{\mathbb{Z}}
\def\N{\mathbb{N}}
\def\C{\mathbb{C}}

\newtheorem{Lemma}{Lemma}[section]
\newtheorem{Theorem}[Lemma]{Theorem}

\newtheorem{cor}[Lemma]{Corollary}

\theoremstyle{definition}

\theoremstyle{remark}

\newtheoremstyle{Acknowledgments}
  {}
    {}
     {}
     {}
    {\bfseries}
    {}
     {.5em}
     {\thmname{#1}\thmnumber{ }\thmnote{ (#3)}}
\theoremstyle{Acknowledgments}

\title{Semi-invariants of a matrix and a covector}

\author{Jonathan Elmer}
\address{Middlesex University\\
The Burroughs, Hendon, London\\
NW4 4BT UK}
\email{j.elmer@mdx.ac.uk}

\date{\today}
\subjclass[2020]{13A50, 16G20}
\keywords{invariants, matrix, quiver}

\begin{document}

\maketitle

 \begin{abstract}
We prove the following theorem: let $\M_d$ denote the set of $d \times d$ matrices over an infinite field $K$, and let ${(K^d)^*}$ be the set of row vectors. Define an action of $\SL_d(K)$ on $X:= \M_d \oplus (K^d)^*$ by
\[ g \cdot (A,\phi) = (gAg^{-1}, \phi g^{-1}).\]
Then $K[X]^{\SL_d}$ is a polynomial ring, generated by the coefficients of the characteristic polynomial of $A$ and one further invariant, namely $$\Delta(A,\phi):= \det(\phi,\phi A,\phi A^2,\ldots, \phi A^{d-1})^t.$$

Our proof is entirely classical in nature, but we give an interpretation of the result and its proof in terms of quiver representation theory.
\end{abstract}

\section{Introduction}
Let $K$ be an infinite field, $G$ a linear algebraic group over $K$ and $X$ a $G$-variety (on which $G$ acts on the left). Denote by $K[X]$ the algebra of regular functions on $X$. If $X$ is a vector space we may identify $K[X]$ with $S(\mathcal{X}^*)$.  We may define a left action of $G$ on $K[X]$ as follows:
\begin{equation} g \cdot f(v) = f(g^{-1} \cdot v) \end{equation}
for all $g \in G$ and $v \in X$. The fixed points of this action are precisely the regular functions constant on $G$-orbits in $X$. These form a subalgebra $K[X]^G$ of $K[X]$ called the algebra of {\it $G$-invariants}.

More generally, suppose $\theta:G \rightarrow K^*$ is a multiplicative character. We define
\[K[X]^G_\theta = \{f \in K[X]: g \cdot f = \theta(g) f \ \text{for all} \ g \in G\}.\]
It is easy to see that $K[X]^G_\theta$ is a $K[X]^G$-module. Elements of $K[X]^G_{\theta}$ are called {\it isobaric $G$-semi-invariants of weight $\theta$}. 

Now let $H:= [G,G] \leq G$. Clearly every character of $G$ is trivial on $H$. One can show (see, e.g. \cite[Lemma~9.4.1]{DerksenWeyman}) that we have a decomposition of $K[X]^G$-modules
\begin{equation}\label{weightdecomp} K[X]^H = \bigoplus_{\theta \in \mathcal{X}^*(G)} K[X]^{G}_\theta, \end{equation} where $\mathcal{X}^*(G)$ is the set of characters of $G$. 
In this context, $H$-invariants are often called $G$-semi-invariants. 

Before stating our main result we fix some notation. Let $d \geq 1$ and let $G:= \GL_d(K)$, then $H:= [G,G] = \SL_d(K)$. By the terms invariants, semi-invariants, etc, we mean $G$-invariants, $G$-semi-invariants, etc. One can show that every character of $G$ takes the form $\theta_m(g) = \det(g)^m$ for some $m \in \Z$, and we write $K[X]^G_m:= K[X]^G_{\theta_m}$ to simplify notation. We will call elements of $K[X]^G_m$ isobaric semi-invariants of weight $m$.

Let $\M_d$ denote the set of $d \times d$ matrices over $K$ and let $(K^d)^*$ be the set of row vectors. A left action of $G$ on $\M_d$ is given by
\[g \cdot A= gAg^{-1},\]
and a left action of $G$ on $(K^d)^*$ is given by
\[g \cdot \phi = \phi g^{-1}.\]
We fix $X = \M_d \oplus (K^d)^*$, and consider the diagonal action of $G$ on this space, so
\begin{equation}\label{action} g \cdot (A,\phi) = (gAg^{-1},\phi g^{-1}).\end{equation}

Recall that the characteristic polynomial of a matrix $A \in \M_d$ is
\[\chi_A(t) = \det(tI_d-A) \in K[t].\]
This is a monic polynomial of degree $d$. Since $$\chi_{gAg^{-1}}(t) = \det(tI_d-gAg^{-1}) = \det(g) \det(tI-A)\det(g^{-1}) = \chi_A(t),$$ for all $g \in G$, the function $c_i: \M_d \rightarrow K$ which evaluates the coefficient of $t^i$ in $\chi_A(t)$ is  $G$-invariant, for all $i=0, \ldots, d-1$. It is well-known that
\begin{equation}\label{matrixinvar} K[\M_d]^{G} = K[c_0,c_1,\ldots, c_{d-1}],\end{equation}
see for instance \cite[Example~2.1.3]{DerksenKemper}.
We will abuse notation slightly, defining $c_i \in K[X]^G$ by $c_i(A, \phi):= c_i(A)$. We will show in the next section that one also has $$K[X]^G = K[c_0,c_1,\ldots, c_{d-1}].$$

We now define a polynomial function $\Delta \in K[X]$ by
\begin{equation}\label{Delta} \Delta(A,\phi):= \det(\phi,\phi A,\phi A^2,\ldots, \phi A^{d-1})^t. \end{equation}
In the next section we will show that $\Delta$ is an isobaric semi-invariant with weight $1$. 

The main purpose of this article is to prove the following result:
\begin{Theorem}\label{main}
The algebra of semi-invariants $K[X]^H$ is a polynomial ring generated by $c_0,c_1, \ldots, c_{d-1}$ and $\Delta$.
\end{Theorem}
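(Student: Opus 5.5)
The plan is to use the weight decomposition \eqref{weightdecomp}, $K[X]^H=\bigoplus_{m\in\Z} K[X]^G_m$, and to show that $K[X]^G_m=0$ for $m<0$ and $K[X]^G_m=\Delta^m K[c_0,\ldots,c_{d-1}]$ for $m\geq 0$. Algebraic independence of $c_0,\ldots,c_{d-1},\Delta$ then comes for free: if $\sum_m p_m(c)\Delta^m=0$, the terms have distinct weights and so vanish separately. Since $\Delta\neq 0$ (take $A$ the companion matrix and $\phi=e_1^t$) and $K[X]$ is a domain, each $p_m=0$, and the $c_i$ are independent by \eqref{matrixinvar}.

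First, $\Delta$ is semi-invariant of weight $1$. Replacing $(A,\phi)$ by $(gAg^{-1},\phi g^{-1})$ replaces each row $\phi A^i$ by $\phi A^i g^{-1}$, so the matrix gets multiplied by $g^{-1}$. Combined with $(g\cdot f)(v)=f(g^{-1}v)$, this gives the weight $\det(g)$.

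Next, take $f\in K[X]^G_m$ and restrict to the set $U=\{\Delta\neq 0\}$, the pairs in which $\phi$ is a cyclic covector for $A$. The key normal form is that every $(A,\phi)\in U$ is $G$-conjugate to a unique pair $(C_a,e_1^t)$. Here $C_a$ is the companion matrix (in the row convention) with $\chi_{C_a}=t^d+a_{d-1}t^{d-1}+\dots+a_0$, and the conjugating element is $g$ with $g^{-1}=P(A,\phi)$, the matrix with rows $\phi A^i$. Writing out $f(A,\phi)=f(g^{-1}\cdot(C_a,e_1^t))=\theta_m(g)f(C_a,e_1^t)$ gives
\[ f(A,\phi)=\Delta(A,\phi)^{\pm m}\,h(c_0(A),\ldots,c_{d-1}(A)), \]
where $h(a)=f(C_a,e_1^t)$ is a polynomial. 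The sign is fixed by matching with $\Delta$'s own weight, so it is $\Delta^m$. Because $U$ is dense, this identity holds on all of $X$ as an identity in $K[X][\Delta^{-1}]$, that is, $f=\Delta^m h(c)$.

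The main obstacle is the case $m<0$. There the identity reads $\Delta^{|m|}f=h(c)$, and the right side does not depend on $\phi$. Setting $\phi=0$ makes the left side vanish, so $h(c)=0$ as a function on $\M_d$, hence $h=0$ by \eqref{matrixinvar}, and then $f=0$. For $m\geq 0$ we get $f=\Delta^m h(c)$ with $h$ polynomial, as required; for $m=0$ this also proves $K[X]^G=K[c_0,\ldots,c_{d-1}]$.

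Two details need care. One must check that $h$ really is polynomial in the $c_i$, which holds because $a\mapsto(C_a,e_1^t)$ is a polynomial map and the $c_i(C_a)$ are $\pm a_i$. One must also check that density arguments are legitimate over an infinite $K$: a polynomial vanishing on the nonempty open set $U$ is zero, which suffices since both sides of the identity are polynomials after clearing $\Delta$.
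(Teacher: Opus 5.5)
Your argument is correct, and its core is the paper's: the weight decomposition, the companion normal form on $\{\Delta\neq 0\}$, and extension by density. Your row-convention $C_a$ is the right normal form for $\phi=e_1^t$; the matrix displayed in the paper is its transpose.

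The main difference is how negative weights are excluded. The paper does this in Lemma~\ref{posdeg} by acting with scalars $tI_d$, which shows that a weight-$m$ semi-invariant is homogeneous of degree $md$ in $\phi$; this gives $m\geq 0$ and $K[X]^G=K[\M_d]^G$ at once. You instead derive everything from the single identity $f=\Delta^m h(c_0,\ldots,c_{d-1})$ in $K[X]_\Delta$, and dispose of $m<0$ by specializing $\phi=0$. The paper's route yields extra homogeneity information; yours is more economical.

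You also get polynomiality of $h$ by evaluating $f$ on the explicit section $\sigma\colon a\mapsto(C_a,e_1^t)$. This is exactly what justifies the paper's unexplained step ``$F=s\circ\pi$ for some morphism $s$'' (take $s=F\circ\sigma$). It also makes the paper's orbit-classification argument unnecessary.

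One small slip: since $P(A,\phi)\cdot(A,\phi)=(C_a,e_1^t)$, the identity $f(A,\phi)=f(g^{-1}\cdot(C_a,e_1^t))=\theta_m(g)f(C_a,e_1^t)$ needs $g=P(A,\phi)$, not $g^{-1}=P(A,\phi)$. With that correction, $\theta_m(g)=\det P(A,\phi)^m=\Delta^m$ comes out directly and no sign-matching is required.
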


The proof of this result is given in Section 2. In Section 3 we will interpret the result and its proof in the language of quiver representation theory.

\section{Main result}

We begin by proving, as already stated:
\begin{Lemma}
$\Delta$ is an isobaric semi-invariant of weight $1$. In particular, $\Delta \in \C[X]^H$.
\end{Lemma}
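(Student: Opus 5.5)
We compute $g\cdot\Delta$ directly from the definition $g\cdot f(v) = f(g^{-1}\cdot v)$. For $g \in G$ we have $g^{-1}\cdot(A,\phi) = (g^{-1}Ag, \phi g)$. The $i$-th row of the matrix defining $\Delta$ at this point is
\[ (\phi g)(g^{-1}Ag)^{i} = \phi g\, g^{-1}A^{i}g = \phi A^{i} g, \]
since conjugation commutes with taking powers, $(g^{-1}Ag)^i = g^{-1}A^ig$.

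Write $M(A,\phi)$ for the $d\times d$ matrix whose rows are $\phi, \phi A, \ldots, \phi A^{d-1}$. Then $\Delta(A,\phi) = \det M(A,\phi)$, because transposing does not change the determinant. The row computation above says exactly that
\[ M(g^{-1}Ag, \phi g) = M(A,\phi)\, g, \]
so by multiplicativity of the determinant
\[ (g\cdot \Delta)(A,\phi) = \det(M(A,\phi)\,g) = \det(g)\,\Delta(A,\phi). \]
Hence $g\cdot\Delta = \theta_1(g)\Delta$, and $\Delta \in K[X]^G_1$ is an isobaric semi-invariant of weight $1$. Note that with the left action $g\cdot f(v)=f(g^{-1}v)$ the weight comes out as $\det(g)^{+1}$, since $g^{-1}$ acts on $\phi$ by right multiplication by $g$. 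If one instead got $\det(g)^{-1}$, the rows would have been transformed by $g^{-1}$ rather than $g$; the careful sign check is the only real obstacle.

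For the ``in particular'', restrict to $H=\SL_d(K)$. There $\det(g)=1$, so $g\cdot\Delta=\Delta$ and $\Delta\in K[X]^H$. Equivalently, this is the weight-$1$ summand in the decomposition \eqref{weightdecomp}. Finally, $\Delta$ is clearly a polynomial in the entries of $A$ and $\phi$, so it lies in $K[X]$.
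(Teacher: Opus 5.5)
Your proof is correct and follows essentially the same route as the paper. You compute $(g\cdot\Delta)(A,\phi)=\Delta(g^{-1}Ag,\phi g)$, note that each row $\phi A^i$ becomes $\phi A^i g$, and conclude by multiplicativity of the determinant that $g\cdot\Delta=\det(g)\Delta$. Your bookkeeping of $g$ versus $g^{-1}$ is, if anything, more explicit than the paper's.
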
 

\begin{proof}
Let $g \in G$. Then we have \[g \cdot \phi A^i = \phi g^{-1} (g A g^{-1})^i = \phi g^{-1} g A^i g^{-1} = \phi A^{i} g^{-1}.\] Therefore
\[g \cdot \Delta(A, \phi) = \det( \phi g,  \phi A g, \phi A^2 g,\ldots, \phi A^{d-1} g ) = \Delta \det(g).\]
as required.
\end{proof}

\begin{cor} The set $\{c_0,c_1, \ldots, c_{d-1},\Delta\} \subset K[X]^H$ is algebraically independent.
\end{cor}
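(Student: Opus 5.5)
We need to show that $c_0,\ldots,c_{d-1},\Delta$ are algebraically independent. The plan is to combine the known independence of the $c_i$ with a weight argument under the scaling action.

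\textbf{Reduction to one polynomial relation.} Since $c_0,\ldots,c_{d-1}$ are algebraically independent in $K[\M_d]$ by \eqref{matrixinvar}, they remain so in $K[X]$. Suppose there is a nonzero polynomial $F(y_0,\ldots,y_{d-1},z)$ with $F(c_0,\ldots,c_{d-1},\Delta)=0$. Write
\[F=\sum_{m} F_m(y_0,\ldots,y_{d-1})z^m .\]

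\textbf{Separating the powers of $\Delta$.} Consider the scaling action of $t\in K^*$ on the covector, $\phi\mapsto t\phi$, with $A$ fixed. The $c_i$ are unchanged, while $\Delta$ is homogeneous of degree $d$ in $\phi$, so $\Delta\mapsto t^d\Delta$. Hence $F_m(c)\Delta^m$ is homogeneous of degree $dm$ in the $\phi$-variables. Distinct $m$ give distinct degrees, so each summand vanishes separately. (Alternatively, each summand lies in $K[X]^G_m$, and the sum in \eqref{weightdecomp} is direct.) Since $K[X]$ is a domain, it suffices to show $\Delta\neq 0$. Then each $F_m(c)=0$, hence each $F_m=0$ by the independence of the $c_i$, contradicting $F\neq 0$.

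\textbf{Nonvanishing of $\Delta$.} This is the only real check, and it is easy. Take $A$ to be the cyclic shift matrix with $e_iA=e_{i+1}$ for $i<d$ and $e_dA=0$, and take $\phi=e_1$. Then $\phi A^i=e_{i+1}$, so $\Delta(A,\phi)=\det I_d=1\neq 0$. Because $K$ is infinite, a polynomial that takes a nonzero value is a nonzero element of $K[X]$, and this completes the argument.
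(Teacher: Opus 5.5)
Your proof is correct and follows the paper's own argument. You expand in powers of $\Delta$, separate the summands by weight, and then invoke the algebraic independence of the $c_i$; your $\phi$-degree grading is exactly the weight under the scalars $tI_d$, as in Lemma~\ref{posdeg}. You also explicitly check that $\Delta\neq 0$ using the shift matrix, a step the paper uses without comment when it passes from $\Delta^i p_i(c)=0$ to $p_i(c)=0$.
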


\begin{proof}
Suppose $F(c_0,c_1, \ldots, c_{d-1}, \Delta) = 0$ for some polynomial $F$ over $K$ in $d+1$ variables. We may write
\[F(c_0,c_1, \ldots, c_{d-1}, \Delta) = \sum_{i=0}^k \Delta^i p_i(c_0,c_1, \ldots, c_{d-1}) = 0\] for some $k$ and some polynomials $p_i: i=0 \ldots, k$ over $K$ in $d$ variables.
Each summand $\Delta^i p_i$ is isobaric of (different) weight $i$. By \eqref{weightdecomp} each summand must be zero. Hence $p_i(c_0, c_1, \ldots, c_{d-1})=0$ for all $i$. By \eqref{matrixinvar}, $c_0, \ldots, c_{d-1}$ are algebraically independent which shows that $p_i$ is identically zero for all $i$, hence $F$ is identically zero.
\end{proof}

We will also need the following:
\begin{Lemma}\label{posdeg}
$K[X]^H$ contains no nonzero isobaric semi-invariants of negative weight. In addition, $K[X]^G = K[\M_d]^G$.
\end{Lemma}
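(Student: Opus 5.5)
We will use the scalar matrices in $G$. For $t \in K^*$, the element $g = tI_d$ acts on $X$ by $(A,\phi) \mapsto (A, t^{-1}\phi)$. Therefore, for $f \in K[X]$,
\[
(tI_d)\cdot f(A,\phi) = f(A, t\phi).
\]
Decompose $f$ by degree in the $\phi$-variables:
\[
f = \sum_{j \ge 0} f_j, \qquad f_j(A, t\phi) = t^j f_j(A,\phi).
\]
Then $(tI_d)\cdot f = \sum_j t^j f_j$.

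Now suppose $f \in K[X]^G_m$. Since $\det(tI_d) = t^{dm}$, we need
\[
\sum_j t^j f_j = t^{dm}\sum_j f_j \quad \text{for all } t \in K^*.
\]
The field $K$ is infinite, so we may compare coefficients of powers of $t$. This gives $f_j = 0$ for $j \neq dm$, so $f$ is homogeneous of degree $dm$ in $\phi$.

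This settles both claims quickly.

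\emph{Negative weight.} If $m<0$, then $dm<0$. But every $\phi$-degree $j$ is nonnegative, so all $f_j$ vanish and $f=0$.

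\emph{Weight zero.} If $m=0$, then $f$ has $\phi$-degree $0$, i.e.\ $f \in K[\M_d]$. Since $f$ is $G$-invariant, $f \in K[\M_d]^G$. The reverse inclusion $K[\M_d]^G \subseteq K[X]^G$ is clear, because functions of $A$ alone that are invariant on $\M_d$ remain invariant on $X$. Together with \eqref{matrixinvar}, this gives $K[X]^G = K[\M_d]^G = K[c_0,\ldots,c_{d-1}]$.

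The only point needing care is the coefficient comparison. It is legitimate because $K$ is infinite: a polynomial identity in $t$ holding for all $t\in K^*$ forces equality of the coefficients, applied componentwise to the polynomial functions $f_j$ evaluated at each point of $X$. I do not anticipate any real obstacle beyond this.
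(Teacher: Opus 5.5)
Your proof is correct and is essentially the paper's argument: act by the scalar matrices $tI_d$, use that $K$ is infinite to compare powers of $t$, conclude that $f$ is homogeneous of $\phi$-degree $dm$, and deduce both $m \ge 0$ and, when $m=0$, that $f \in K[\M_d]^G$. Beyond the paper, you spell out the reverse inclusion $K[\M_d]^G \subseteq K[X]^G$ and the pointwise coefficient comparison. One small wording point: for $m<0$ the identity in $t$ is a Laurent identity rather than a polynomial one, so multiply through by $t^{-dm}$ before comparing coefficients.
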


\begin{proof}
Let $x_1,x_2, \ldots, x_d$ be the coordinate functions on $(K^d)^*$. Any nonzero $f \in K[X]$ can be expressed as $\sum_{\mathbf{e} \in \N^d} f_{\mathbf{e}} \mathbf{x}^\mathbf{e}$ where $\mathbf{x}^\mathbf{e} = \prod_{i=1}^d x_i^{e_i}$ and $f_{\mathbf{e}} \in K[\M_d]$. Suppose $f \in K[X]^G_m$ and let $g= tI_d$.  Then $$t^{md} f = g \cdot f =  \sum_{\mathbf{e} \in \N^d} t^{|\mathbf{e}|} f_{\mathbf{e}} \mathbf{x}^\mathbf{e}$$ where $|\mathbf{e}| = \sum_{i=1}^d e_i$. Since this holds for all $t \in K^*$, we must have $f_{\mathbf{e}} = 0$ for all $\mathbf{e}$ where $|\mathbf{e}| \neq md$. In particular, we must have $m \geq 0$ as required. Further, if $m=0$ then $f_{\mathbf{e}} = 0$ whenever $\sum_{i=1}^d e_i >0$, so $f \in K[\M_d]$. Since in this case $f$ is $G$-invariant, we have $f \in K[\M_d]^G$ as required.
\end{proof}

We are now ready to prove our main result:

\begin{proof}[Proof of Theorem \ref{main}]
Let $R = K[c_0, c_1, \ldots, c_{d-1}, \Delta]$. We have shown that $R$ is a polynomial ring and $R \subseteq K[X]^H$; only the reverse inclusion remains to be shown.

Consider the set $$X_s = \{(A, \phi) \in X: \Delta(A, \phi) \neq 0\}.$$ Then $X_s$ is open and dense in the irreducible variety $X$. Moreover, $(A, \phi) \in X_s$ if and only if $\B_{A, \phi}:=\{\phi, \phi A, \ldots, \phi^{d-1} A\}$ is a basis of $(K^d)^*$.

Suppose $(A, \phi) \in X_s$. Then there exist unique constants $b_0, \ldots, b_{d-1}$ such that
\[\phi A^d = \sum_{i=0}^{d-1} b_i \phi A^i.\]
On the other hand, by the Cayley-Hamilton theorem we have
\[A^d = -\sum_{i=0}^{d-1} c_i(A) A^i.\]
Applying $\phi$ on the left shows immediately that $b_i=-c_i(A)$ for all $i$.

By considering the action of $A$ on the basis $\B_{A, \phi}$, we see that there exists $g \in G$ such that
\[g \cdot (A, \phi) = \left(\begin{pmatrix} 0 & 0 & \cdots & 0 & -c_{0}(A) \\ 1 & 0 & \cdots & 0 & -c_1(A)\\
0 & 1 & \cdots &0 & -c_2(A)\\ \vdots & \vdots & \cdots & \vdots & \vdots \\ 0 & 0 & \cdots & 1& -c_{d-1}(A)  \end{pmatrix}, (1,0,0,\ldots, 0) \right).\]

Therefore, we see that $(A,\phi), (B, \psi) \in X_s$ belong to the same $G$-orbit if $c_i(A)=c_i(B)$ for all $i=0, \ldots, d-1$.
Conversely, if $c_i(A)=c_i(B)$ for all $i=0, \ldots, d-1$, we may choose an element $g \in G$ satisfying $\phi A^i g^{-1} = \psi B^i$ for all $i=0, \ldots, d-1$. Then we also have
\[\phi A^d g^{-1} = -\sum_{i=0}^{d-1} c_i(A) \phi A^i g^{-1} =  -\sum_{i=0}^{d-1} c_i(B) \psi B^i = \psi B^d.\]
Therefore
\[\phi A^{i} A g^{-1} = \psi B^i B = \phi A^i g^{-1} B\]
so \[\phi A^i A  = \phi A^i (g^{-1} B g)\] for all $i=0, \ldots, d-1$. Since $A$ and = $g^{-1}Bg$ have the same image on every element of the basis $\B_{A, \phi}$, we conclude that $A = g^{-1}Bg$. Therefore
\[g \cdot (A, \phi) = (B, \psi),\] i.e. $(A, \phi)$ and $(B, \psi)$ belong to the same $G$-orbit.
 
Define a morphism $\pi: X_s \rightarrow K^d$ by
\[\pi(A, \phi) = (c_0(A), c_1(A), \ldots, c_{d-1}(A)).\]
The fibres of $\pi$ are precisely the orbits in $X_s$. 

Now suppose $f \in K[X]^H$ is isobaric of weight $m$. By Lemma \ref{posdeg}, we have $m \geq 0$. Consider the restriction $\hat{f}$ of $f$ to $X_s$, which is also isobaric of weight $m$. Since $K[X_s] = K[X]_{\Delta}$, we have $F:=\Delta^{-m}\hat{f} \in K[X_s]$. Moreover, $F$ is isobaric of weight zero, so $F \in K[X_s]^G$. Therefore $F$ is constant on the fibres of $\pi$ and $F = s \circ \pi$ for some morphism $s:K^d \rightarrow K$. This shows that $F = p(c_0, \ldots, c_{d-1})$ as functions on $X_s$, for some polynomial $p$ in $d$ variables. It follows that
$\hat{f} = \Delta^m p(c_0, \ldots, c_{d-1})$ as functions on $X_s$. Since $X_s$ is dense we must have that $f =  \Delta^m p(c_0, \ldots, c_{d-1})$ as functions on $X$. Now by \eqref{weightdecomp} we have $f \in R$ for all $f \in K[X]^H$ as required.
 \end{proof}

\section{Motivation and context}

Recall that a {\it quiver} is a quadruple $Q = (Q_v,Q_a,t,h)$, consisting of two ordered sets $Q_v$ (vertices)  and $Q_a$ (arrows), along with two functions $t, h: Q_a \rightarrow Q_v$ (tail and head respectively). It is usually visualised as a directed graph with a node for each element of $Q_v$, and for each $a \in Q_a$ a directed edge leading from $t(a)$ to $h(a)$. Assume $Q$ is finite, then we may choose an order on the vertices and on the arrows, setting $Q_v = \{x_1, \ldots, x_k\}$ and $Q_a = \{a_1, \ldots, a_n\}$.

Let $K$ be a field.  A {\it representation} $V$ of the quiver $Q$ over $K$ is an assignment to each vertex $x \in Q_v$ of a vector space $V(x)$, and to each arrow $a \in Q_a$ of a linear map $V(a): V(t(a)) \rightarrow V(h(a))$.  We write $\alpha = (\alpha(x): x \in Q_v) \in \N^k$ where $\alpha(x) = \dim(V(x))$ for all $x \in Q_v$; this is called the {\it dimension vector} of $V$. A homomorphism $\phi: V \rightarrow W$ between representations $V, W$ of $Q$ is a collection of linear maps $(\phi(x): x \in Q_v)$ such that $\phi(x): V(x) \rightarrow W(x)$
for all $x \in Q_v$, and for all $a \in Q_a$ the following diagram commutes:\\

\begin{center}
\begin{tikzpicture}
\node[left] at (0,0){$V(t(a))$};
\node[right] at (3,0){$V(h(a))$};
\node[left] at (0,-2){$W(t(a))$};
\node[right] at (3,-2){$W(h(a))$};
\draw[->] (0,0)--(3,0);
\node[above] at (1.5,0){$V(a)$};
\draw[->] (0,-2)--(3,-2);
\node[above] at (1.5,-2){$W(a)$};
\draw[->] (-0.5,-0.5) -- (-0.5,-1.5);
\node[left] at (-0.5,-1){$\phi(t(a))$};
\draw[->] (3.5,-0.5) -- (3.5,-1.5);
\node[right] at (3.5,-1){$\phi(h(a))$};
\end{tikzpicture}
\end{center}


A homomorphism $\phi: V \rightarrow W$ is an isomorphism if $\phi(x)$ is an isomorphism for each $x \in Q_v$.  We say that $W$ is a subrepresentation of $V$ if $W(x) \leq V(x)$ for all $x \in Q_v$ and the collection of inclusion maps $i(x): W(x) \hookrightarrow V(x)$ is a homomorphism $W \rightarrow V$. We write $W \leq V$ if $W$ is a subrepresentation of $V$.


There is a natural notion of direct sum for representations of a given quiver: if $V$ and $W$ are representations of $Q$ with dimension vectors $\alpha$ and $\beta$ respectively, then $V \oplus W$ is the representation of $Q$ with dimension vector $\alpha+\beta$ defined by
\[(V \oplus W)(x) = V(x) \oplus W(x)\] for all $x \in Q_v$ and
\[(V \oplus W)(a) = V(a) \oplus W(a)\] for all $a \in Q_a$.

A representation is said to be {\it indecomposable} if it is not isomorphic to the direct sum of two non-trivial representations. The Krull-Remak-Schmidt Theorem \cite[Theorem~1.7.4]{DerksenWeyman} states that every representation of $Q$ may be written as a direct sum of indecomposable representations and this decomposition is unique up to reordering the summands. Thus, describing the representations of $Q$ up to isomorphism is reduced to the problem of describing the isomorphism classes of indecomposable representations. The quiver $Q$ is said to have:

\begin{enumerate}
\item {\it finite representation type} if $Q$ has only finitely many isomorphism classes of indecomposable representations;
\item {\it tame representation type} if the indecomposable representations of $Q$ in each dimension vector up to isomorphism occur in finitely many one-parameter families;
\item {\it wild representation type} otherwise.
\end{enumerate}

For any $p,q \geq$ we let $\M_{p,q}$ denote the set of $p \times q$ matrices over $K$. We usually write  $\M_{p}$ for $\M_{p,p}$. Let $V$ be a representation of the quiver. By choosing a basis of each vector space $V(x)$, we may identify $V$ with the $n$-tuple of matrices $$\bA = (A_1, A_2, \ldots, A_n)$$ where $A_j \in \M_{\alpha(h(a_j)),\alpha(t(a_j))}$ for all $j$ is the matrix representing $V(a_j)$ with respect to the chosen basis. Choosing a different basis is tantamount to replacing $\bA$ with 
\begin{equation}\label{quiveraction} g \cdot \bA:= (g_{t(a_1)} A_1 g^{-1}_{h(a_1)},  g_{t(a_2)} A_2 g^{-1}_{h(a_2)}, \ldots,g_{t(a_n)} A_n g^{-1}_{h(a_n)} ) \end{equation} where
$$g = (g_{x_1},g_{x_2}, \ldots, g_{x_k}) \in  \GL_{\alpha}(K):= \prod_{i=1}^k \GL_{\alpha(x_i)}(K)$$ is the $k$-tuple of change of basis matrices where $g_{x_i}$ describes the change of basis on $V(x_i)$. Thus, we have an action of $\GL_{\alpha}(K)$ on 
\[\M_{Q,\alpha}:= \prod_{i=1}^n \M_{\alpha(h(a_i)),\alpha(t(a_i))}\] and a pair of $n$-tuples of matrices $\bA, \bA' \in \M_{Q,\alpha}$ represent isomorphic representations of $Q$ if and only if they lie in the same $\GL_{\alpha}(K)$-orbit.

The algebra of invariants $K[\M_{Q,\alpha}]^{\GL_\alpha(K)}$ associated to the action defined in \eqref{quiveraction} is denoted by $I(Q,\alpha)$. Notice that $[\GL_\alpha(K),\GL_{\alpha}(K)] = \SL_\alpha(K):=  \prod_{i=1}^k \SL_{\alpha(x_i)}(K)$, so it makes sense to call $SI(Q,\alpha):= K[\M_{Q,\alpha}]^{\SL_{\alpha}(K)}$ the algebra of semi-invariants of $(Q, \alpha)$. By \eqref{weightdecomp} we have a decomposition $$SI(Q,\alpha) = \bigoplus_{\theta \in \mathcal{X}^*(\GL_\alpha(K))} SI(Q,\alpha)_{\theta}$$
where $ SI(Q,\alpha)_{\theta} = K[\M_{Q,\alpha}]^{\GL_\alpha(K)}_{\theta}$. One can show that every character of $\GL_{\alpha}(K)$ is of the form
\[\prod_{i=1}^k (\det)^{m_i},\] see \cite[Exercise~9.8.1]{DerksenWeyman}.

Semi-invariants of quivers are closely tied to representation type. A celebrated result of Sato and Kimura \cite{SatoKimura} states that $SI(Q,\alpha)$ is polynomial for every dimension vector $\alpha$ is and only if $Q$ has finite representation type. Along similar lines, Skowro\'{nski} and Weyman \cite{SkowronskiWeyman} showed that $SI(Q,\alpha)$ is a complete intersection for every dimension vector $\alpha$ is and only if $Q$ has finite or tame representation type.

The group action studied in this paper is of the form \eqref{quiveraction}. The action associated with the quiver 
\begin{center}
\begin{tikzpicture}[
    >=Stealth,
    vertex/.style={circle,draw,minimum size=2mm,inner sep=1pt, fill},
    every loop/.style={looseness=50}
]

\node[vertex] (x) at (0,0) {};
\node[vertex] (y) at (3,0) {};

\draw[->] (x) edge[loop above] node {$a_1$} ();
\draw[->] (x) -- node[above] {$a_2$} (y);

\end{tikzpicture}
\end{center}
with dimension vector $(d,1)$ is an action of $\GL_d(K) \times K^*$ on $\M_d \times (K^d)^*$ defined by
\[(g,h) \cdot (A, \phi) = (gAg^{-1},  h \phi g^{-1})\]
for $g \in \GL_d(K), h \in K^*$. The kernel of this action is $D:=\{(tI_d, t): t \in K^*\}$, so this can be regarded as an action of 
$$(\GL_d(K) \times K^*)/D \cong G.$$ We have an algebra isomorphism
\[K[X]^{\GL_d \times K^*} \cong K[X]^G\] and $K[X]^G$ module isomorphisms
\[K[X]^{\GL_d \times K^*}_{m,-md} \cong K[X]_m^G\]
where $K[X]^{\GL_d \times K^*}_{m,-md}$ denotes semi-invariants relative to the character $(g,h) \mapsto \det(g) ^m h^{-md}$. 

This quiver is quite an interesting one. Its representation type is known to be wild. However, it is {\it minimally wild} in the sense that removing a vertex, or contracting an edge always results in a quiver with tame (in fact, finite) representation type. In spite of this, its representations and semi-invariants are not widely studied. I know of only one example, which is a crucial step in the proof of \cite[Theorem~1]{SkowronskiWeyman}: the authors show that $SI(Q,(5,2))$ is not a complete intersection. Presumably they first ruled out the possiblity that $SI(Q,(d,1))$ is not a complete intersection but if they did so they did not publish their result.

Returning to the general situation, let $\theta$ be a nontrivial character of $\GL_{\alpha}(K)$. The kernel of $\theta$ is a subgroup of $\GL_{\alpha}(K)$ containing $\SL_{\alpha}(K)$.  A result of A. King (see \cite[Proposition~9.8.3, Lemma 9.8.5]{DerksenWeyman}) states that the following are equivalent:

\begin{enumerate}
\item $\ker(\theta) \cdot V$ is closed with maximal dimension;
\item $\sum_{i=1}^k m_i\beta_i <0$ for all $\beta \in \N^k$ which are dimension vectors of a proper subrepresentation $U \leq V$.
\end{enumerate}

A representation of $Q$ satisfying these conditions is said to be {\it stable} with respect to $\theta$. For the quiver studied in the present article, the open dense subset $X_s \subseteq X$ which appears in the proof of Theorem \ref{main} is precisely the set of representations which are stable with respect to the character $\theta_1(g,h) = \det(g) h^{-d}$: to see this, note that by (2) above, $(A, \phi)$ is stable with respect to $\theta_1$ if and only if there is no proper subspace $U \subseteq K^d$ satisfying $A(U) \subseteq U$ and $U \subseteq \ker(\phi)$. However, in this instance we also have $$\ker(\theta_1) = \{(g,h) \in \GL_d(K) \times K^*: \det(g) = h^d\},$$ and $\ker(\theta_1)/D \cong \SL_d(K)$, so that the orbits of $\ker(\theta_1)$ are in fact the orbits of $\SL_d(K)$ and (1) implies that $X_s$ is precisely the set of points whose $\SL_d(K)$ orbit is closed with maximal dimension.

In the classical proof of \eqref{matrixinvar} one considers the restriction of invariant functions in  $\K[M_d]$ to the subset of diagonalisble matrices. These are precisely the matrices in $\M_d$ whose $\SL_d(K)$ orbit is closed with maximal dimension. In that sense, our proof of Theorem \ref{main} is analogous to the classical proof of \eqref{matrixinvar}.
\bibliographystyle{plain}
\bibliography{MyBib}

@book {DerksenKemper,
    AUTHOR = {Derksen, Harm and Kemper, Gregor},
     TITLE = {Computational invariant theory},
    SERIES = {Invariant Theory and Algebraic Transformation Groups, I},
      NOTE = {Encyclopaedia of Mathematical Sciences, 130},
 PUBLISHER = {Springer-Verlag},
   ADDRESS = {Berlin},
      YEAR = {2002},
     PAGES = {x+268},
      ISBN = {3-540-43476-3},
   MRCLASS = {13A50 (13P10)},
  MRNUMBER = {MR1918599 (2003g:13004)},
MRREVIEWER = {Dmitri I. Panyushev},
}

@book {DerksenWeyman,
    AUTHOR = {Derksen, Harm and Weyman, Jerzy},
     TITLE = {An introduction to quiver representations},
    SERIES = {Graduate Studies in Mathematics},
    VOLUME = {184},
 PUBLISHER = {American Mathematical Society, Providence, RI},
      YEAR = {2017},
     PAGES = {x+334},
      ISBN = {978-1-4704-2556-2},
   MRCLASS = {16-02 (13A50 14L24 16G10 16G20 16G70)},
  MRNUMBER = {3727119},
MRREVIEWER = {Xueqing Chen},
       DOI = {10.1090/gsm/184},
       URL = {https://doi.org/10.1090/gsm/184},
}

@article {SatoKimura,
    AUTHOR = {Sato, M. and Kimura, T.},
     TITLE = {A classification of irreducible prehomogeneous vector spaces
              and their relative invariants},
   JOURNAL = {Nagoya Math. J.},
  FJOURNAL = {Nagoya Mathematical Journal},
    VOLUME = {65},
      YEAR = {1977},
     PAGES = {1--155},
      ISSN = {0027-7630},
   MRCLASS = {32M10 (20G05)},
  MRNUMBER = {430336},
MRREVIEWER = {A. L. Onishchik},
       URL = {http://projecteuclid.org/euclid.nmj/1118796150},
}

@article {SkowronskiWeyman,
    AUTHOR = {Skowro\'{n}ski, A. and Weyman, J.},
     TITLE = {The algebras of semi-invariants of quivers},
   JOURNAL = {Transform. Groups},
  FJOURNAL = {Transformation Groups},
    VOLUME = {5},
      YEAR = {2000},
    NUMBER = {4},
     PAGES = {361--402},
      ISSN = {1083-4362},
   MRCLASS = {16G20 (13A50 20G05)},
  MRNUMBER = {1800533},
MRREVIEWER = {Olivier G. Schiffmann},
       DOI = {10.1007/BF01234798},
       URL = {https://doi.org/10.1007/BF01234798},
}

\end{document}